\newtheorem{ut}{Theorem}
\newtheorem{up}[ut]{Proposition}
\newtheorem{ul}[ut]{Lemma}
\newtheorem*{ucon}{Conjecture}
\newtheorem{uc}[ut]{Corollary}
\theoremstyle{remark}
\newtheorem{ur}{Remark}
\theoremstyle{definition}
\subjclass[2020]{54F45, 54D05, 54F15} 
\title[dichotomy for spaces  near dimension  $0$]{A dichotomy for spaces  near dimension zero}
\author{David S. Lipham}
\address{Department of Mathematics, Auburn University at Montgomery, Montgomery 
AL 36117, United States of America}
\email{dsl0003@auburn.edu}
\begin{document}

\begin{abstract}We prove that the classes of weakly $1$-dimensional and almost $0$-dimensional spaces are disjoint. The result has applications to  hereditarily locally connected spaces,  $\mathbb R$-trees, and endpoints of smooth fans. \end{abstract}

\maketitle

\section{Introduction}

All spaces under consideration are assumed to be separable and metrizable. For basics of dimension theory and equivalent formulations of dimension, we refer the reader to \cite{eng1}. 

A space $X$ is \textbf{almost zero-dimensional} if $X$ has a neighborhood basis of C-sets, where a \textbf{C-set} is an intersection of clopen subsets of $X$. The dimension of an almost zero-dimensional space is at most $1$ \cite{ov2}. Examples of dimension equal to $1$ include the line-free subgroups of $\ell^2$ known as \textit{Erd\H{o}s spaces} \cite{dim,erd}. They  surface in  homeomorphism groups of manifolds \cite{ov2,dij4,erd}, complex dynamics \cite{31,lipp,lippp}, and projective Fra\"{i}ss\'{e} theory \cite{basso}. Endpoints of smooth fans and $\mathbb R$-trees are universal for almost zero-dimensional spaces \cite{31,erd,ov2}.

 A space $X$ is \textbf{weakly $1$-dimensional} if $\dim(X)=1$ but its dimensional kernel $$\Lambda (X)=\{x\in X:X\text{ is } 1\text{-dimensional at }x\}$$ has dimension $0$. Constructions of weakly $1$-dimensional spaces have been   featured in \cite{w1,cos,inf}; see also the set $P$  in \cite[Figure 1]{liph}.  van Mill and Tuncali proved that a weakly $1$-dimensional space may occur as the set of buried points of a plane continuum \cite{w4}. 


Almost zero-dimensional and weakly $1$-dimensional spaces were studied together in \cite{w3} and were shown to have similar properties.  Both are easily  seen to be \textbf{totally disconnected}, i.e.\ they contain no non-degenerate connected sets.  Arbitrary products of these spaces are at most $1$-dimensional \cite{w3}, and there are universal elements in each class \cite{erd,w2}. 
The primary goal of this paper is to show that the two classes are disjoint.
 
\begin{ut} If $X$ is almost zero-dimensional, then $$\dim(\Lambda(X))=\dim(X).$$Thus there is no weakly $1$-dimensional, almost zero-dimensional space.  \end{ut}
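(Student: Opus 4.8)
The plan is to prove the contrapositive, after first putting the statement into a cleaner self‑referential form. I would record two reductions. Since $X$ is almost zero‑dimensional we have $\dim(X)\le 1$ by \cite{ov2}, so the only nontrivial case is $\dim(X)=1$, and as $\Lambda(X)\subseteq X$ it suffices to prove $\dim(\Lambda(X))\ge 1$. Next, for any nonempty separable metrizable $Y$ the identity $\dim(Y)=\sup_{y\in Y}\dim_y(Y)$ gives the equivalence
$$\dim(Y)\le 0\iff \Lambda(Y)=\varnothing.$$
Applying this with $Y=\Lambda(X)$, the theorem reduces to the self‑contained implication: if $X$ is almost zero‑dimensional and $\Lambda(X)\neq\varnothing$, then $\Lambda(\Lambda(X))\neq\varnothing$; that is, the dimensional kernel, once nonempty, is nowhere locally zero‑dimensional and hence genuinely $1$‑dimensional.

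For the setup I would fix the \emph{witness topology} $\omega$ on $X$ generated by the clopen subsets of $(X,\tau)$. Then $\omega\subseteq\tau$, the space $(X,\omega)$ is zero‑dimensional, the $\omega$‑clopen sets coincide with the $\tau$‑clopen sets, the C‑sets are exactly the $\omega$‑closed sets, and by hypothesis $\tau$ has a neighborhood basis of $\omega$‑closed sets; moreover almost zero‑dimensionality is hereditary and forces total separatedness. I would also note that $Z:=\{x:\dim_x(X)\le 0\}$ is a $G_\delta$, since $Z=\bigcap_n\mathcal V_{1/n}$ where $\mathcal V_{1/n}$ is the open union of all clopen sets of diameter $<1/n$, so that $\Lambda(X)=X\setminus Z$ is $F_\sigma$ and, under the assumption $\dim(\Lambda(X))\le 0$, a countable union of closed zero‑dimensional sets.

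The core is a separation argument. Given disjoint closed sets $A,B\subseteq X$, classical dimension theory (\cite{eng1}), applied with $\dim(X)\le 1$ and $\dim(\Lambda(X))\le 0$, yields a partition $S$ between $A$ and $B$ with $\dim(S)\le 0$ that \emph{avoids} the zero‑dimensional set $\Lambda(X)$, so $S\subseteq Z$. Thus every point of $S$ has a neighborhood basis of clopen sets, and at such points $\tau$ and $\omega$ agree locally. The decisive step is to \emph{remove} $S$, replacing it by the empty partition and thereby separating $A$ from $B$ by an honest clopen set. I would do this by covering $S$ with small clopen sets and cohering them into a single clopen separator, exploiting that around $S\subseteq Z$ the two topologies coincide and that the zero‑dimensional witness $(X,\omega)$ permits a global clopen choice; the $\omega$‑closed (C‑set) basis is exactly what guarantees that the union of the chosen clopen pieces is again clopen. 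This separates every pair $A,B$ by a clopen set, whence $\dim(X)\le 0$ and $\Lambda(X)=\varnothing$, completing the contrapositive.

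I expect this removal step to be the main obstacle, and it is precisely where almost zero‑dimensionality is indispensable: for a general weakly $1$‑dimensional space the analogous small clopen sets accumulate onto the kernel and cannot be cohered, which is exactly why such spaces exist. The substance of the argument is therefore to show that the $\omega$‑closed basis controls these accumulations. An alternative route to the same crux, better matched to the applications in the abstract, is to embed $X$ as a set of endpoints of a smooth fan or an $\mathbb R$‑tree (via the universality cited in \cite{31,erd,ov2}) and to identify $\Lambda(X)$ geometrically with the endpoints approached along their arcs; this set is cohesive and hence $1$‑dimensional, again giving $\Lambda(\Lambda(X))\neq\varnothing$. Either way, the heart of the matter is transferring the $1$‑dimensionality of $X$ onto its dimensional kernel.
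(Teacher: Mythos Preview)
Your reduction is correct, but the ``removal step'' is a genuine gap, and the direction you chose makes it hard to close. You push the partition $S$ \emph{away} from $\Lambda(X)$ into $Z=X\setminus\Lambda(X)$ and then hope that, because $\tau$ and $\omega$ agree locally on $Z$, a countable union of small clopen pieces covering $S$ can be ``cohered'' into a clopen set. But a union of clopen sets is only open; to get it closed you must show its complement is $\tau$-open, and nothing in your setup forces that. Equivalently, what you need is that the closed set $S$ is a C-set (i.e.\ $\omega$-closed), and the fact that $\tau|_S=\omega|_S$ does not imply $S$ is $\omega$-closed in $X$. The $\omega$-closed basis tells you each point \emph{outside} $S$ has a C-set neighborhood missing $S$, but that only says $S$ is an intersection of $\tau$-closed sets, which is automatic. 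I do not see how to promote $S\subset Z$ to a C-set without further structure on $Z$, and $Z$ has no evident $\sigma$C-set decomposition.

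The paper does the opposite: it pushes the boundary \emph{into} $\Lambda(X)$, applying Proposition~4 with the zero-dimensional set $X\setminus\Lambda(X)$ to obtain $\partial V\subset\Lambda(X)$. The point is that $\Lambda(X)$, unlike $Z$, carries an explicit $\sigma$C-set structure: writing $A_n=X\setminus\bigcup\mathscr C_n$ for a cover $\mathscr C_n$ of $Z$ by clopen sets of diameter $<1/n$, one has $\Lambda(X)=\bigcup_n A_n$ with each $A_n$ a C-set (Lemma~7). Under the hypothesis $\dim(\Lambda(X))=0$ each $A_n$ is zero-dimensional, so the closed set $\partial V\cap A_n$ is a C-set in $A_n$; by the retract theorem (C-sets in C-sets are C-sets in $X$, Proposition~5) each $\partial V\cap A_n$ is a C-set in $X$; hence $\partial V$ is a closed $\sigma$C-set, and by Proposition~6 a C-set. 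Now one separates $x$ from the C-set $\partial V$ by a clopen $W$, and $V\setminus W$ is the desired clopen neighborhood. The two nontrivial ingredients you are missing are exactly Propositions~5 and~6 together with the $\sigma$C-set decomposition of $\Lambda(X)$; your witness-topology formulation does not replace them. Your alternative fan/$\mathbb R$-tree route is also incomplete: the claim that $\Lambda(X)$ becomes a cohesive endpoint set is not established, and in any case the corollaries for fans and $\mathbb R$-trees are \emph{derived from} Theorem~1, so this would be circular.
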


\begin{ur}Many  familiar almost zero-dimensional spaces have the property $\Lambda(X)\in \{\varnothing, X\}$. For such examples Theorem 1 is trivial.  On the other hand, the set of points atop the Fra\"{i}ss\'{e} fence,  denoted $\mathfrak U$ in \cite[Section 5]{basso}, is an almost zero-dimensional Polish space  with the property that $\Lambda(\mathfrak U)$ and $\mathfrak U\setminus \Lambda(\mathfrak U)$ are each dense in $\mathfrak U$ (see  \cite[Proposition 5.19 \& Theorem 5.22]{basso}  and observe that $\Lambda(\mathfrak U)=\mathfrak M$ and $\mathfrak U\setminus \Lambda(\mathfrak U)=\mathfrak U\cap  \mathfrak L$). \end{ur}

We will apply Theorem 1 in hereditarily locally connected spaces, which include all $\mathbb R$-trees. 

A connected space $X$ is \textbf{hereditarily locally connected} if  every connected subspace of $X$ is locally connected.  
An \textbf{$\mathbb R$-tree} is a locally arcwise connected, uniquely arcwise connected space.

 \begin{uc}If $X$ is a hereditarily locally connected space (e.g.\ an $\mathbb R$-tree), then $\dim(\Lambda(Y))=\dim(Y)$ for every subspace $Y$ of $X$. Thus $X$ does not contain a weakly $1$-dimensional space.\end{uc}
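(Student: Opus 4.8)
The plan is to derive the corollary from Theorem~1 through a dichotomy on the subspace $Y$, according to whether or not $Y$ is totally disconnected. Two ambient facts will be used throughout: that a hereditarily locally connected space has dimension at most $1$ (classical; and obvious for $\mathbb{R}$-trees), so that $\dim(Y)\le\dim(X)\le 1$ for every $Y\subseteq X$; and the triviality $\dim(\Lambda(Y))\le\dim(Y)$ coming from $\Lambda(Y)\subseteq Y$. Thus in either case it suffices to prove the reverse inequality $\dim(\Lambda(Y))\ge\dim(Y)$.

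First suppose $Y$ is not totally disconnected, so that $Y$ contains a nondegenerate connected set $C$. Since the only nonempty subset of the connected set $C$ that is clopen in $C$ is $C$ itself, no point $y\in C$ has clopen neighborhoods in $C$ of arbitrarily small diameter; hence $\dim_y(C)\ge 1$ for every $y\in C$. By monotonicity of local dimension, $\dim_y(Y)\ge\dim_y(C)\ge 1$, and since $\dim(Y)\le 1$ this forces $\dim_y(Y)=1$, i.e.\ $C\subseteq\Lambda(Y)$. As $\dim(C)\ge 1$, we conclude $\dim(\Lambda(Y))\ge\dim(C)\ge 1=\dim(Y)$. Note that this case uses only $\dim(X)\le 1$.

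Now suppose $Y$ is totally disconnected. Here I would invoke the structural lemma that every totally disconnected subspace of a hereditarily locally connected space is almost zero-dimensional; granting it, Theorem~1 applies to $Y$ directly and gives $\dim(\Lambda(Y))=\dim(Y)$. The closing assertion then follows at once: a weakly $1$-dimensional $W\subseteq X$ would satisfy $\dim(W)=1$ and $\dim(\Lambda(W))=0$, contradicting the equality just established.

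The main obstacle is the structural lemma invoked in the second case; the first case needs nothing beyond $\dim(X)\le 1$, and this division of labor is forced, since a weakly $1$-dimensional space is itself totally disconnected and $1$-dimensional, so the hypothesis of hereditary local connectedness must do all of its work precisely on totally disconnected $Y$. To prove the lemma I would exploit the abundance of small separators in a hereditarily locally connected space—the branch points of an $\mathbb{R}$-tree being the guiding example—to produce, around each $y\in Y$ and inside each prescribed neighborhood, a neighborhood that is an intersection of sets clopen in $Y$, i.e.\ a C-set. The mechanism is to separate $y$ from the remainder of a small neighborhood using cut points (or small, e.g.\ countable, boundaries) of a basis in $X$, and to realize each such separation as a clopen trace on the totally disconnected set $Y$; a suitably nested family of these traces then furnishes the required C-set neighborhood basis. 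The delicate point—and where hereditary local connectedness is genuinely used—is arranging the separators to shrink and to be compatible with $Y$ so that their traces are actually clopen in $Y$.
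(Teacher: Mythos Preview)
Your proof is correct and matches the paper's approach: the paper assumes $\dim(\Lambda(Y))=0$, notes (implicitly via your Case~1 reasoning) that $Y$ must then be totally disconnected, cites Proposition~8---exactly your ``structural lemma'' that totally disconnected subsets of hereditarily locally connected spaces are almost zero-dimensional---and applies Theorem~1. Your case split is just a reorganization of the same contrapositive, and the lemma you isolate as the crux is precisely what the paper quotes from Oversteegen--Tymchatyn, with an elementary appendix proof provided for $\mathbb{R}$-trees.
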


Theorem 1 also applies to endpoints of smooth fans. A \textbf{smooth fan} is a compact, connected subset of the Cantor fan (the cone over the Cantor set) \cite{char, eber}. 
The set of all endpoints of a smooth fan $X$ is  denoted $E(X)$. 
 
 \begin{uc}If $X$ is a smooth fan, then $\dim(\Lambda(Y))=\dim(Y)$ for every subspace $Y$ of $E(X)$. Thus $E(X)$ does not contain a weakly $1$-dimensional space. \end{uc}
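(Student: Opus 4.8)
The plan is to reduce Corollary 3 to Theorem 1 by combining two facts: that the endpoint set of a smooth fan is almost zero-dimensional, and that almost zero-dimensionality is inherited by arbitrary subspaces. Once these are in hand, the statement becomes formal.

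First I would invoke the structural fact, referenced in the introduction, that for every smooth fan $X$ the endpoint set $E(X)$ is almost zero-dimensional \cite{31,erd,ov2}. This is the only substantive external input, and it is the step I expect to be the main obstacle in the sense that all the content lives there; everything downstream is bookkeeping. Second, I would record the hereditary property: if $Z$ is almost zero-dimensional and $Y\subseteq Z$, then $Y$ is almost zero-dimensional. Indeed, given $y\in Y$ and a neighborhood of $y$ in $Y$, extend it to a neighborhood in $Z$ and choose a C-set neighborhood $C=\bigcap_\alpha U_\alpha$ of $y$ in $Z$, where each $U_\alpha$ is clopen in $Z$. Then each $U_\alpha\cap Y$ is clopen in $Y$, so $C\cap Y=\bigcap_\alpha(U_\alpha\cap Y)$ is an intersection of clopen subsets of $Y$, i.e.\ a C-set of $Y$; as $C$ ranges over a C-set neighborhood basis at $y$ in $Z$, the sets $C\cap Y$ form such a basis at $y$ in $Y$.

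Combining the two observations, every subspace $Y$ of $E(X)$ is almost zero-dimensional. Applying Theorem 1 to the space $Y$ then yields the first conclusion at once, namely $\dim(\Lambda(Y))=\dim(Y)$. For the final assertion I would argue by contradiction: if some $Y\subseteq E(X)$ were weakly $1$-dimensional, then by definition $\dim(Y)=1$ while $\dim(\Lambda(Y))=0$, contradicting the equality just established. Hence $E(X)$ contains no weakly $1$-dimensional subspace.

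I should note that Corollary 2, concerning hereditarily locally connected spaces and $\mathbb R$-trees, has exactly the same shape of proof, differing only in which ambient space supplies the almost zero-dimensionality of the relevant subsets; so it is natural to prove both corollaries in parallel once Theorem 1 is available. The only genuine mathematical weight is carried by the cited fact that $E(X)$ is almost zero-dimensional, together with Theorem 1 itself.
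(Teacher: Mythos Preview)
Your proof is correct and follows essentially the same route as the paper: both observe that $E(X)$ is almost zero-dimensional (the paper justifies this via its representation as the graph of an upper semi-continuous function with zero-dimensional domain, citing \cite[Chapter 4]{erd} and \cite[Lemma 14]{rod}) and then apply Theorem~1 to each subspace $Y$. You simply make explicit the hereditary step---that subspaces of almost zero-dimensional spaces are almost zero-dimensional---which the paper leaves tacit in the phrase ``direct consequence.''
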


\section{Almost zero-dimensional versus weakly $1$-dimensional}

In order to prove Theorem 1 we will require a few facts about zero-dimensional spaces and C-sets. 

Recall that a space $X$ is \textbf{zero-dimensional at} $x\in X$ if every neighbourhood of $x$ contains a clopen neighbourhood of $x$.  
  Moreover $X$ is \textbf{zero-dimensional} if it has a neighbourhood basis of clopen sets.

 We will use the symbols $\partial$ and $\overline{\phantom{w}}$ for boundary and closure of a set, respectively. In other texts such as  \cite{eng1},  the boundary is called the `frontier' and is denoted $\text{Fr}$.

\begin{up}[{\cite[Proposition 1.2.12]{eng1}}]If $Z$ is a zero-dimensional subspace of a separable metric space $X$, then for every $x\in X$ and neighborhood $U$ of $x$ there is an open set $V$ containing $x$ such that $V\subset U$ and $\partial V\subset X\setminus Z$.\end{up}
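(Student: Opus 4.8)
The plan is to reduce the statement to the construction of a single real-valued ``cutting function'' on $X$ and to build that function directly from the clopen structure of $Z$. Using regularity of the metric space $X$, I would first fix an open set $V_0$ with $x\in V_0\subset\overline{V_0}\subset U$. The sets $\overline{V_0}\cap Z$ and $Z\setminus U$ are then disjoint and closed in $Z$, so, since a zero-dimensional separable metric space is strongly zero-dimensional, I can separate them by a clopen partition $Z=E_1\sqcup E_2$ with $\overline{V_0}\cap Z\subset E_1\subset Z\cap U$. The reason for demanding a clopen partition of all of $Z$ is that each piece then satisfies $\overline{E_i}^{\,X}\cap Z=E_i$, which is the identity the rest of the argument runs on.

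Next I would set $P=\overline{V_0}\cup\overline{E_1}^{\,X}$ and $Q=(X\setminus U)\cup\overline{E_2}^{\,X}$, so that $P$ carries the ``$x$-side'' data and $Q$ the ``outside-$U$'' data. A short computation with $\overline{E_i}^{\,X}\cap Z=E_i$ shows $P\cap Z=E_1$ and $Q\cap Z=E_2$, so the closed set $D:=P\cap Q$ is disjoint from $Z$. I would then define, on the open set $X\setminus D$,
\[\varphi(y)=\frac{d(y,P)}{d(y,P)+d(y,Q)},\]
which is continuous there (the denominator vanishes exactly on $P\cap Q=D$), equals $0$ on $E_1$ and $1$ on $E_2$, and hence satisfies $\varphi(Z)\subset\{0,1\}$. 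Taking $V=\{y\in X\setminus D:\varphi(y)<\tfrac12\}$ yields an open subset of $X$ that contains $x$ (the neighbourhood $V_0$ was chosen to meet $Z$ inside $E_1$, which forces $x\in P\setminus Q$) and lies in $U$ (since $\varphi<\tfrac12$ forces $y\notin Q\supseteq X\setminus U$).

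The crux, and the step I expect to be the main obstacle, is controlling $\partial V$. The naive hope of separating the $x$-side from the outside-$U$-side by genuinely \emph{disjoint} open sets fails, because $\overline{E_1}^{\,X}$ and $\overline{E_2}^{\,X}$ may well intersect, so there need not be an open set that simultaneously traps $E_1$ and avoids $\overline{E_2}^{\,X}$. The device that rescues the argument is to route the boundary through precisely this unavoidable intersection: any boundary point of $V$ lying in $X\setminus D$ must be a level point $\varphi=\tfrac12$, hence outside $Z$ because $\varphi(Z)\subset\{0,1\}$, while any boundary point lying in $D$ is outside $Z$ because $D\cap Z=\varnothing$. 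Either way $\partial V\subset X\setminus Z$, which is the desired conclusion. Beyond elementary metric topology, the only inputs are the strong zero-dimensionality of $Z$ (to get the clopen partition) and regularity of $X$ (to get $V_0$), both standard.
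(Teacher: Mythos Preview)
The paper does not give its own proof of this proposition; it is quoted directly from Engelking's \textit{Dimension Theory} and used as a black box in the proof of Theorem~1. So there is no in-paper argument to compare against.

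Your argument is correct and is essentially the classical one behind Engelking's result: split $Z$ by a clopen partition separating the two relevant closed traces, then push the partition out to $X$ by a Urysohn-type function whose level set $\tfrac12$ (together with the unavoidable overlap $D=P\cap Q$) absorbs the boundary of $V$. The verification that $x\in P\setminus Q$ deserves one more word than your parenthetical gives it: from $\overline{V_0}\cap Z\subset E_1$ one gets $E_2\subset X\setminus V_0$, and since $V_0$ is open this yields $\overline{E_2}^{\,X}\cap V_0=\varnothing$, hence $x\notin\overline{E_2}^{\,X}$ and thus $x\notin Q$. With that detail filled in, every step checks: $P\cap Z=E_1$, $Q\cap Z=E_2$, $D\cap Z=\varnothing$, $\varphi(Z)\subset\{0,1\}$, $V\subset U$, and any boundary point of $V$ lies either in $D$ or on the level set $\varphi=\tfrac12$, in both cases outside $Z$.
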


The next proposition is a simple consequence of  {\cite[Theorem 4.19]{erd}}, which states that C-sets in almost zero-dimensional spaces are retracts.

\begin{up}[{\cite[Corollary 4.20]{erd}}] Let $A$ be a C-set in an almost zero-dimensional space $X$. If $B$ is a C-set in $A$, then $B$ is a C-set in $X$.\end{up}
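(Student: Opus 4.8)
The plan is to extract from the stated fact a concrete retraction and then transport the clopen sets defining $B$ back into $X$ along it. Since $A$ is a C-set in the almost zero-dimensional space $X$, \cite[Theorem 4.19]{erd} supplies a retraction $r\colon X\to A$; that is, $r$ is continuous, $r(X)=A$, and $r\restriction A=\id_A$. This retraction is the entire engine of the argument, and it is exactly the ingredient the remark preceding the statement tells us to use.

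With $r$ in hand, I would write $B=\bigcap_{i\in I}C_i$ where each $C_i$ is clopen in $A$, and consider the preimages $r^{-1}(C_i)$. Because $r$ is continuous and each $C_i$ is simultaneously open and closed in $A$, each $r^{-1}(C_i)$ is simultaneously open and closed in $X$, hence clopen in $X$. The crux is then the set-theoretic identity
\[
  B \;=\; A\cap\bigcap_{i\in I} r^{-1}(C_i).
\]
The inclusion $\subseteq$ holds because a point of $B$ lies in $A$, is fixed by $r$, and lies in every $C_i$; the inclusion $\supseteq$ holds because any point $x$ of the right-hand side lies in $A$, so $r(x)=x$, and membership in $r^{-1}(C_i)$ then forces $x=r(x)\in C_i$ for every $i$.

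To finish, I would invoke that $A$ itself is a C-set in $X$, writing $A=\bigcap_{j\in J}E_j$ with each $E_j$ clopen in $X$. Substituting into the identity above yields
\[
  B \;=\; \Bigl(\bigcap_{j\in J}E_j\Bigr)\cap\Bigl(\bigcap_{i\in I} r^{-1}(C_i)\Bigr),
\]
an intersection of clopen subsets of $X$, which is precisely the assertion that $B$ is a C-set in $X$. I do not anticipate a genuine obstacle here: once the retraction is granted, the only point requiring care is the displayed identity, and that reduces to the fact that $r$ fixes $A$ pointwise. All the substantive work is hidden inside \cite[Theorem 4.19]{erd}, which we are entitled to assume.
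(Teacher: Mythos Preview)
Your argument is correct and is precisely the approach the paper intends: the paper does not give a proof but merely remarks that the statement is a simple consequence of \cite[Theorem 4.19]{erd} (C-sets in almost zero-dimensional spaces are retracts), and you have spelled out that consequence by pulling back the clopen sets along the retraction and intersecting with a clopen representation of $A$.
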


A countable union of C-sets  is called a  \textbf{$\boldsymbol{\sigma}$C-set}.

\begin{up}[{\cite[Theorem 4.4]{coh}}]If $X$ is almost zero-dimensional, then every closed $\sigma\text{C-set}$ in $X$ is a  C-set in $X$.\end{up}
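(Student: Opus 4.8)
The plan is to prove the equivalent statement that $F=\bigcap\{U:U\text{ clopen in }X,\ F\subseteq U\}$; that is, every point outside $F$ is separated from $F$ by a clopen set. Write $F=\bigcup_n C_n$ with each $C_n$ a C-set. First I would record two routine reductions. A finite union of C-sets is again a C-set, since $(\bigcap_i U_i)\cup(\bigcap_j V_j)=\bigcap_{i,j}(U_i\cup V_j)$ and each $U_i\cup V_j$ is clopen; so I may assume $C_1\subseteq C_2\subseteq\cdots$. And by Proposition 5 I may replace $X$ by the C-set $K=\bigcap\{U:U\text{ clopen},\ F\subseteq U\}$, since proving that $F$ is a C-set in $K$ yields that it is a C-set in $X$. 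After this reduction the only clopen superset of $F$ is the whole space, so it suffices to show $F=X$.

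It is convenient to name the topology $\mathcal W$ on $X$ generated by the clopen sets. These are closed under finite intersection, finite union, and complementation, so they form a basis for $\mathcal W$ consisting of $\mathcal W$-clopen sets; and since clopen sets separate the points of an almost zero-dimensional space (given $x\neq y$, a C-set neighborhood of $x$ inside $X\setminus\{y\}$ supplies one), $(X,\mathcal W)$ is a zero-dimensional Hausdorff space coarser than $X$. A set is a C-set in $X$ exactly when it is $\mathcal W$-closed. In this language each $C_n$ is $\mathcal W$-closed, $F$ is a $\mathcal W$-$F_\sigma$ set that is $X$-closed, and, after the reduction above, $F$ is $\mathcal W$-dense; the goal is to show $F$ is $\mathcal W$-closed, equivalently $F=X$.

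Fix $z\notin F$. Using that $X$ is almost zero-dimensional together with the $X$-closedness of $F$, choose a C-set neighborhood $N$ of $z$ with $N\cap F=\varnothing$; thus $z$ lies in the nonempty $X$-interior of the $\mathcal W$-closed set $N$. For each $n$, since $z\notin C_n$ and $C_n$ is an intersection of clopen sets, there is a clopen $D_n\ni z$ with $D_n\cap C_n=\varnothing$. The set $\bigcap_n D_n$ contains $z$ and misses $F$, but it is only a C-set, and $N$ in general has empty $\mathcal W$-interior (it may contain no nonempty clopen set — precisely the one-dimensional behaviour that almost zero-dimensional spaces permit). The entire difficulty is therefore to upgrade the countable family $(D_n)$ of clopen separators into a single clopen set around $z$ missing $F$; this is the step that genuinely uses the $\sigma$C hypothesis rather than mere closedness, since an arbitrary closed set need not be a C-set.

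To effect that upgrade I would run a fusion over the levels $n$: recursively refine the clopen sets $D_1\cap\cdots\cap D_n$ so as to track $z$ inside $N$ while peeling off the successive $C_n$, using a fixed metric and Proposition 4 (applied to the zero-dimensional set on which $X$ fails to be one-dimensional, whose boundaries can be pushed off) to arrange that the nested clopen pieces stabilize to a clopen limit rather than merely to a C-set. The main obstacle is exactly this bridging of the two topologies: converting the $\mathcal W$-data — $\mathcal W$-closedness of the $C_n$ and $\mathcal W$-density of $F$ — into the $X$-topological conclusion $F=X$ in the presence of spots where $N$ has empty $\mathcal W$-interior. Organizing the recursion so that the fusion closes up is the technical heart of the result, and it is this core separation lemma that is established in the cited source.
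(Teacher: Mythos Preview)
The paper does not prove this proposition; it is quoted verbatim from \cite{coh} and used as a black box in the proof of Theorem~1. There is therefore no in-paper argument to compare your proposal against.

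On its own merits your proposal is not a proof. The reductions are sound: replacing the $C_n$ by an increasing sequence, and passing via Proposition~5 to the C-set $K=\bigcap\{U\text{ clopen}:F\subseteq U\}$, are legitimate and do reduce the task to showing $K\setminus F=\varnothing$. Your reformulation in the coarse topology $\mathcal W$ is also a reasonable bookkeeping device. But at the decisive point you only \emph{announce} a ``fusion over the levels $n$'' and then explicitly concede that ``it is this core separation lemma that is established in the cited source.'' Since that lemma is exactly the proposition under discussion, the argument is circular: you have restated the difficulty rather than resolved it.

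The one concrete tool you invoke, Proposition~4 applied to the zero-dimensional set $X\setminus\Lambda(X)$, does not supply the missing step. That proposition lets you push the boundary of an open set into $\Lambda(X)$, but Proposition~6 carries no hypothesis on $\Lambda(X)$ whatsoever, and a boundary lying in $\Lambda(X)$ gives no purchase toward manufacturing a single clopen set separating $z$ from $F$. The actual mechanism in \cite{coh} is different and is precisely the content you leave blank.
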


\begin{ul}If $X$ is almost zero-dimensional, then $\Lambda(X)$  is $\sigma\text{$\mathrm{C}$-set}$ in $X$.\end{ul}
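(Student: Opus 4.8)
The plan is to reduce everything to the elementary observation that an open subspace of an almost zero-dimensional space is a $\sigma$C-set, and then to localize $\Lambda(X)$ inside suitable C-sets. First I would rewrite the kernel in terms of clopen sets. Since $\dim(X)\le 1$, a point $x$ lies in $\Lambda(X)$ precisely when $X$ fails to be zero-dimensional at $x$, that is, when some neighborhood of $x$ contains no clopen neighborhood of $x$. Fixing a countable base $\{B_n\}$ for $X$ and setting $O_n=\bigcup\{V:V\text{ is clopen in }X\text{ and }V\subseteq B_n\}$, I would verify the identity
$$\Lambda(X)=\bigcup_n\bigl(B_n\setminus O_n\bigr)=\bigcup_n\bigl(B_n\cap A_n\bigr),\qquad A_n:=X\setminus O_n.$$
For the forward inclusion, a neighborhood witnessing failure of zero-dimensionality at $x$ can be shrunk to a basic $B_n\ni x$, and then no clopen $V$ satisfies $x\in V\subseteq B_n$, so $x\in B_n\setminus O_n$; the reverse inclusion is immediate, since $x\in B_n\setminus O_n$ exhibits $B_n$ as a neighborhood of $x$ admitting no clopen neighborhood inside it. The point of this formulation is that each $A_n$ is an intersection of clopen sets (the complements of the clopen $V\subseteq B_n$), hence a genuine C-set in $X$.

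Next I would establish the auxiliary fact that every open subspace $G$ of an almost zero-dimensional space $Y$ is a $\sigma$C-set in $Y$. For each $x\in G$ choose, from the neighborhood basis of C-sets, a C-set $C_x$ in $Y$ with $x\in\operatorname{int}(C_x)\subseteq C_x\subseteq G$. Because $G$ is Lindel\"of, countably many of the interiors $\operatorname{int}(C_{x_k})$ cover $G$, and since each $C_{x_k}\subseteq G$ this gives $G=\bigcup_k C_{x_k}$, a countable union of C-sets. I would then apply this inside $A_n$, which is almost zero-dimensional because almost zero-dimensionality passes to subspaces, to the relatively open set $B_n\cap A_n$; this writes $B_n\cap A_n$ as a countable union of C-sets \emph{of} $A_n$. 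Invoking the transitivity of C-sets (Proposition 5: a C-set in a C-set of $X$ is a C-set in $X$) promotes each piece to a C-set in $X$, so $B_n\cap A_n$ is a $\sigma$C-set in $X$, and taking the union over $n$ shows $\Lambda(X)$ is a $\sigma$C-set.

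I expect the main obstacle to be the first step, namely recognizing that local zero-dimensionality is detected by the C-sets $A_n=X\setminus O_n$, so that the awkward pointwise condition defining $\Lambda(X)$ is repackaged as ``relatively open inside a C-set''. Once the pieces $B_n\cap A_n$ are seen to sit relatively openly inside honest C-sets, the remainder is the soft Lindel\"of covering argument together with transitivity; in particular, on this route neither Proposition 4 (the boundary-avoidance result) nor Proposition 6 (closed $\sigma$C-sets are C-sets) seems to be needed, and I would expect those to enter only in the passage from this lemma to Theorem 1. The two routine points to check with care are the forward inclusion in the displayed identity and the heredity of almost zero-dimensionality, the latter being what licenses applying the auxiliary fact inside each $A_n$.
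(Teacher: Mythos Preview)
Your argument is correct, but it takes a more elaborate route than the paper's. The paper exploits the metric directly: for each $n$ it covers $X\setminus\Lambda(X)$ by a family $\mathscr C_n$ of clopen sets of diameter $<1/n$ and sets $A_n=X\setminus\bigcup\mathscr C_n=\bigcap_{C\in\mathscr C_n}(X\setminus C)$, which is visibly a C-set; the observation that a point of $\Lambda(X)$ can lie in no clopen set of sufficiently small diameter then gives $\Lambda(X)=\bigcup_n A_n$ outright. In particular the paper's proof of the lemma uses neither Proposition~5 nor any auxiliary covering argument. Your approach instead decomposes $\Lambda(X)$ into pieces $B_n\cap A_n$ that are only relatively open in a C-set, and then spends a Lindel\"of step plus the transitivity result (Proposition~5) to upgrade each piece to a $\sigma$C-set in $X$. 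What you gain is a reusable side lemma (open sets in almost zero-dimensional spaces are $\sigma$C-sets) and an argument that never names a metric; what you pay is an appeal to Proposition~5, which the paper reserves for the proof of Theorem~1, and a longer chain of reductions where the paper gets the C-set decomposition in one stroke.
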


\begin{proof}Since $\dim(X)\leq 1$  \cite[Theorem 2]{ov2}, $X$ is zero-dimensional at each point of $X\setminus \Lambda(X)$.  Thus every point of $X\setminus \Lambda(X)$ belongs to
clopen sets in $X$ with arbitrarily small diameters. For every $n\geq 1$ let $\mathscr C_n$ be a covering  of $X\setminus \Lambda(X)$ consisting of clopen subsets of $X$ with diameters $<1/n$. Let  $$A_n=X\setminus \bigcup \mathscr C_n.$$  It is easy to see that  $\Lambda(X)=\bigcup_{n=1}^\infty A_n.$  Clearly $$A_n=\bigcap_{C\in \mathscr C_n} X\setminus C$$ is a C-set in $X$, and the proof is complete. \end{proof}

Now we are prepared to prove the main result.

\begin{proof}[Proof of Theorem 1]Let $X$ be an almost zero-dimensional space. Our aim is to prove $\dim(\Lambda(X))=\dim(X)$. Since  $\dim(X)\leq 1$, it suffices to show that $\dim(\Lambda(X))=0$ implies $\dim(X)=0$.

Suppose  $\dim(\Lambda(X))=0$. Let $x\in X$ and let $U$ be any  neighbourhood of $x$.  By Proposition 4 there is an open set $V\subset U$ such that $x\in V$ and $\partial V\subset \Lambda(X)$.  Let $$A_1,A_2,\ldots$$ be C-sets in $X$ whose union is $\Lambda(X)$,   provided  by Lemma 7. Note that $B_n=\partial V \cap A_n$  is a C-set in the space $A_n$ because $B_n$ closed and $A_n$ is zero-dimensional. So by  Proposition 5, $B_n$ is a C-set in $X$. Hence $$\partial V=\bigcup_{n=1}^\infty B_n$$ is a closed $\sigma\textnormal{C-set}$ in $X$.  By Proposition 6, $\partial V$ is a C-set in $X$. 

Let $W$ be a clopen subset of $X$ which contains $\partial V$ and misses $x$. Then $V\setminus W$ is a clopen subset of $X$ which contains $x$ and lies inside of $U$. Since $x$ and $U$ were arbitrary, this proved  $\dim(X)=0$. \end{proof}

\section{Corollaries}
In order to prove Corollary 2 we will need the following.
\begin{up}[{\cite[Corollary 9]{ov2}}]If $X$ is hereditarily locally connected, then every totally disconnected subset of $X$ is almost zero-dimensional.\end{up}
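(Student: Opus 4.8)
The plan is to verify the defining property of almost zero-dimensionality directly: for a fixed totally disconnected $Y\subseteq X$, a point $y\in Y$, and a neighbourhood $U$ of $y$, I must produce a C-set $N$ of $Y$ with $y\in\operatorname{int}_Y N\subseteq N\subseteq U$. Since I only need a neighbourhood basis of C-sets at each point, I may fix $y$ and $U$, carry out the construction inside $X$, and intersect with $Y$ at the end. The two hypotheses play complementary roles here: total disconnectedness of $Y$ is what will force the separations to exist, while hereditary local connectedness of $X$ is what will make those separations clopen.

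First I would exploit the ambient structure. A nondegenerate hereditarily locally connected space is connected, locally connected, and at most $1$-dimensional (the last point because a higher-dimensional locally connected continuum always contains a subcontinuum that fails to be locally connected). Using local connectedness together with $\dim X\le 1$, I would choose a connected open set $V$ with $y\in V$, $\overline V\subseteq U$, and $Z:=\partial V$ zero-dimensional. The components $\{O_i\}$ of the open set $X\setminus\overline V$ are then open in $X$, clopen in $X\setminus\overline V$, and attach to $\overline V$ only along $Z$, so that $\partial O_i\subseteq Z$. The candidate C-set is $N=\overline V\cap Y$, whose $Y$-interior contains $V\cap Y\ni y$ and which lies in $U$.

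The core of the argument is to show that $N$ is a C-set, i.e.\ that every $z\in Y\setminus N$ can be separated from $N$ by a subset of $Y$ that is clopen in $Y$. Such a $z$ lies in some component $O_i$, and $O_i\cap Y$ is already clopen in $Y\setminus N$; the sole obstruction to its being clopen in $Y$ is the seam $Z\cap Y\cap\overline{O_i}$. Here I would use that $Z$ is zero-dimensional, hence so is the trace $Z\cap Y$. Applying Proposition 4 to the zero-dimensional set $Z\cap Y$ produces separators in $X$ whose boundaries avoid $Z\cap Y$; combined with the fact that away from $Z$ the set $Y\setminus\overline V$ is partitioned into the relatively clopen pieces $O_i\cap Y$, this should yield a genuine clopen subset of $Y$ that contains $z$ and misses $N$. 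Assembling these separators over all $z$ exhibits $N$ as the intersection of the clopen sets containing it, which is exactly the assertion that $N$ is a C-set.

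The hard part will be precisely this seam-handling, and it is where hereditary local connectedness is indispensable rather than cosmetic. Because $Y$ may be genuinely $1$-dimensional at $y$ (for instance when $Y$ is a copy of Erd\H{o}s space), one cannot hope to separate with walls that avoid all of $Y$; the separations must be pushed into the zero-dimensional boundary set $Z$, and one must guarantee that the complementary pieces $O_i$ do not ``reconnect'' across $Z\cap Y$ in a way that destroys clopenness. This is exactly what the absence of a continuum of convergence in a hereditarily locally connected space secures: it controls how the family $\{O_i\}$ clusters along $Z$ and keeps the pieced-together separators clopen. I would therefore expect the technical heart of a full proof to be a structural lemma of this type about $X$, of the kind isolated in \cite{ov2}, after which the C-set verification above goes through. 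As a consistency check, in the special case of an $\mathbb R$-tree the required separators arise concretely from nearest-point retractions onto finite subtrees, whose fibres furnish the clopen sets.
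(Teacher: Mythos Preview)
The paper does not give its own proof of this statement; it is quoted from \cite{ov2}, and the paper only records that the argument there rests on a theorem of Nishiura and Tymchatyn \cite{nis}: \emph{in every subspace of a hereditarily locally connected space, each connected component is a C-set}. That is the structural lemma you anticipate in your final paragraph but do not name. (For $\mathbb R$-trees the paper supplies an elementary argument in Appendix~A, which is in the spirit of your closing ``consistency check'': one finds a point $c\notin Y$ on the arc from $\overline U$ to $b$ and checks directly that the piece of $X\setminus\{c\}$ containing $\overline U$ is clopen there.)

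Your route diverges at the crucial step. You invoke $\dim X\le 1$ to arrange $Z=\partial V$ zero-dimensional and then reach for Proposition~4, but Proposition~4 only produces open sets in $X$ whose \emph{boundaries} avoid the zero-dimensional set $Z\cap Y$; it does not manufacture sets that are \emph{clopen in $Y$}. Concretely, your proposed separator $O_i\cap Y$ fails to be closed in $Y$ exactly along $\partial O_i\cap Y\subseteq Z\cap Y$, and nothing in the sketch explains how to absorb or avoid those seam points --- pushing boundaries off $Z\cap Y$ via Proposition~4 just relocates them elsewhere in $Y$, where $Y$ may well be $1$-dimensional. You are right that hereditary local connectedness (equivalently, the absence of continua of convergence) is what ultimately tames how the $O_i$ accumulate along $Z$, but the usable formulation of that control is the Nishiura--Tymchatyn lemma above, not Proposition~4 or the zero-dimensionality of $\partial V$. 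In short: your setup and your diagnosis of the difficulty are correct, but the dimension-theoretic tool you reach for does not close the gap; the component C-set lemma the paper cites is the missing ingredient.
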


The proof  of Proposition 8 in \cite{ov2} was based on \cite[Theorem 1.5]{nis} which can be stated as follows:  \textit{In every subspace of a hereditarily locally connected space, each connected component is a  C-set}. An elementary proof of Proposition 8 for $\mathbb R$-trees is given in Appendix A.

\begin{proof}[Proof of Corollary 2]Let $X$ be a hereditarily locally connected space and $Y\subset X$. Since $X$ is $1$-dimensional \cite[Corollary 2.2]{lev}, to prove $\dim(\Lambda(Y))=\dim(Y)$ it suffices to show that $\dim(\Lambda(Y))=0$ implies $\dim(Y)=0$. Suppose $\dim(\Lambda(Y))=0$.  Then $Y$ is totally disconnected, and thus almost zero-dimensional by Proposition 8.   By Theorem 1 we have $\dim(Y)=0$. \end{proof}

\begin{proof}[Proof of Corollary 3]The endpoint set of any smooth fan is almost zero-dimensional because it can be represented as the graph of an upper semi-continuous function with zero-dimensional domain; see \cite[Chapter 4]{erd}, also \cite[Lemma 14]{rod}. So the result is a direct consequence of Theorem 1. \end{proof}

\section{Iteration of $\Lambda$}

Define $$\Lambda^2(X)=\Lambda(\Lambda(X))=\{x\in \Lambda(X):\Lambda(X) \text{ is $1$-dimensional at } x\}.$$ Thus  a  space $X$ is weakly $1$-dimensional if and only if $\dim(X)=1$ and $\Lambda^2(X)=\varnothing$. On the other hand, the proof of Theorem 1 can be localized to show that if $X$ almost zero-dimensional, then $\Lambda^2(X)$ is dense in $\Lambda(X)$. We conjecture the following improvement.

\begin{ucon}If $X$ is almost zero-dimensional, then  $\Lambda^2(X)=\Lambda(X)$.\end{ucon}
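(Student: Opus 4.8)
The plan is to reduce the conjecture to a pointwise strengthening of Theorem 1. Since $\Lambda^2(X)\subseteq\Lambda(X)$ always holds, the equality $\Lambda^2(X)=\Lambda(X)$ is equivalent to the assertion that $\Lambda(X)$ is $1$-dimensional at each of its points. Because $p\in\Lambda(X)$ means precisely that $X$ is $1$-dimensional at $p$, this is exactly the \emph{local} form of Theorem 1, namely $\dim_p\Lambda(X)=\dim_p X$ for every $p\in\Lambda(X)$. Writing it contrapositively, I would aim to prove: if $X$ is almost zero-dimensional and $\Lambda(X)$ is zero-dimensional at $p$, then $X$ is zero-dimensional at $p$. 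Granting this, any $p\in\Lambda(X)\setminus\Lambda^2(X)$ would make $X$ zero-dimensional at $p$, contradicting $p\in\Lambda(X)$; hence $\Lambda(X)\setminus\Lambda^2(X)=\varnothing$, which is the conjecture.

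The first steps would mimic the proof of Theorem 1 verbatim, now localized at $p$. As in Lemma 7, the set $X\setminus\Lambda(X)$ is zero-dimensional, and the same covering argument applied to the almost zero-dimensional subspace $\Lambda(X)$ shows that the locus $\Lambda(X)\setminus\Lambda^2(X)$, where $\Lambda(X)$ is zero-dimensional, is itself zero-dimensional. Given a neighbourhood $U$ of $p$, I would apply Proposition 4 with the zero-dimensional set $Z=X\setminus\Lambda(X)$ to obtain an open $V\subseteq U$ with $p\in V$ and $\partial V\subseteq\Lambda(X)$. If $\partial V$ could be shown to be a C-set in $X$, then, exactly as in Theorem 1, a clopen set $W\supseteq\partial V$ missing $p$ would give the clopen neighbourhood $V\setminus W\ni p$ inside $U$, proving $X$ is zero-dimensional at $p$.

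The obstacle is precisely the C-set step. In Theorem 1 the boundary $\partial V\subseteq\Lambda(X)$ was a C-set because $\Lambda(X)$ was \emph{globally} zero-dimensional, so each $\partial V\cap A_n$ was a closed subset of a zero-dimensional C-set and Propositions 5 and 6 applied. Here $\Lambda(X)$ is $1$-dimensional; we only know it is zero-dimensional at the single point $p$. To rerun the argument we would need $\partial V$ to lie in the zero-dimensional locus $\Lambda(X)\setminus\Lambda^2(X)$. But Proposition 4 controls a boundary only by pushing it \emph{off} a prescribed zero-dimensional set, i.e.\ it yields $\partial V\subseteq X\setminus Z$ with $Z$ zero-dimensional; to trap $\partial V$ inside $\Lambda(X)\setminus\Lambda^2(X)$ one would have to take $Z=(X\setminus\Lambda(X))\cup\Lambda^2(X)$, and this is $1$-dimensional because $\Lambda^2(X)$ is. The addition theorem gives only $\dim\bigl(X\setminus\Lambda^2(X)\bigr)\le 1$, never $=0$, so there is no single zero-dimensional slice separating $p$ from the exterior through the right locus. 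This asymmetry, together with the fact that $\Lambda^2(X)$ is dense in $\Lambda(X)$ (so the $1$-dimensional locus one must avoid is everywhere), is what I expect to be the crux of the problem.

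To break the impasse I would pursue one of three refinements. First, prove a sharpened boundary-control lemma: when $p$ is a zero-dimensional point of $\Lambda(X)$, a small relatively clopen set $K\ni p$ in $\Lambda(X)$ with $\diam K$ small should let one choose $V$ so that $\partial V$ is captured inside $K\subseteq\Lambda(X)\setminus\Lambda^2(X)$, restoring the C-set argument; the difficulty is lifting the relative clopenness of $K$ in the $F_\sigma$ set $\Lambda(X)$ to genuine boundary control in $X$. Second, iterate $\Lambda$ transfinitely and attempt a Cantor--Bendixson-style stabilization, showing the dimension of the kernel cannot drop at successor or limit stages; the delicate point is the limit stage. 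Third, I would first settle the Polish case by a Baire-category argument, exploiting that $\Lambda^2(X)$ is dense while $\Lambda(X)\setminus\Lambda^2(X)$ is a zero-dimensional, meagre $F_\sigma\cap G_\delta$ set, and trying to contradict completeness at a putative zero-dimensional point. In all three routes the essential obstacle is the same: forcing a separating boundary to avoid the dense $1$-dimensional locus $\Lambda^2(X)$ while still enclosing $p$.
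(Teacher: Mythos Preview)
The statement you are attempting is not proved in the paper; it is explicitly labeled a \emph{Conjecture}. The paper's only remark is that the proof of Theorem 1 can be localized to show $\Lambda^2(X)$ is dense in $\Lambda(X)$, and it offers nothing further toward the full equality $\Lambda^2(X)=\Lambda(X)$. So there is no ``paper's own proof'' to compare against.

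Your proposal is not a proof either, and you correctly diagnose why the obvious localization of Theorem 1 stalls: to rerun the C-set argument one would need $\partial V\subseteq\Lambda(X)\setminus\Lambda^2(X)$, but Proposition 4 only pushes boundaries \emph{off} a zero-dimensional set, and the complement $(X\setminus\Lambda(X))\cup\Lambda^2(X)$ is $1$-dimensional. This is exactly the gap that makes the statement a conjecture rather than a corollary. Your three suggested refinements are reasonable directions to explore, but none is carried out, and each faces the difficulty you yourself flag (lifting relative clopenness from the $F_\sigma$ kernel, the limit stage in a transfinite descent, or extracting a contradiction from density plus meagreness). In short, your analysis is sound and aligns with the paper's own assessment that only the density of $\Lambda^2(X)$ in $\Lambda(X)$ follows from the available machinery; the full conjecture remains open and your proposal does not close it.
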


\section*{Acknowledgements} I am grateful to Jan J. Dijkstra for past collaborations that influenced this paper, especially for developing the proof of Proposition 6.

\section*{Appendix A}

Below is an elementary proof of Proposition 8 for $\mathbb R$-trees which does not use the hereditarily locally connected property.

\begin{proof}[Proof]Let $X$ be an $\mathbb R$-tree. For any two points $x,y\in X$ let $[x,y]$ denote the unique arc with endpoints $x$ and $y$, and $(x,y]=[x,y]\setminus \{x\}$.

Let $Y\subset X$ be totally disconnected. Let $U$ be any connected open subset of $X$. We will show that $\overline U\cap Y$ is a C-set in $Y$.  Since $X$ is locally connected, this will prove that $Y$ is almost zero-dimensional.

 Let $a\in \overline U$ and $b\in Y\setminus \overline U$. Then $[a,b]\setminus \overline U$ contains an arc, so because $Y$ is totally disconnected  there exists $c\in [a,b]\setminus (\overline U\cup Y)$. Let $A$ be the set of all $x\in X\setminus \{c\}$ such that $[x,c]\cap [a,b]\subset [a,c]$. Clearly $a\in A$ and $b\notin A$. 
 
 We claim that $A$ is clopen in $X\setminus \{c\}$.   To see that $A$ is open, let $x\in A$. Let $V$ be an arcwise connected open set containing $x$ such that $V\cap [c,b]=\varnothing$.  Then each point of $V$ is connected to $c$ by an arc missing $(c,b]$. Thus $V\subset A$. This proves that $A$ is open. To see that $A$ is closed in $X\setminus \{c\}$, let $x\in \overline A\setminus \{c\}$.  Let $y\in (c,b]\setminus \{x\}$.  Let $W$ be arcwise connected open set containing $x$ and not $y$. Since $W$ contains a point of $A$, there is an arc from $x$ to $c$ avoiding $y$. Thus $[x,c]\cap (c,b]\subset \{x\}$. It follows that $x\notin (c,b]$, hence $[x,c]\cap (c,b]=\varnothing$ and $x\in A$. 

By the preceding claim and the fact that $\overline U$ is a connected subset of $X\setminus \{c\}$, we have $\overline U\subset A$. Thus $A\cap Y$ is a relatively clopen subset of $Y$ which contains $\overline U\cap Y$ and misses $b$. Since $b$ was an arbitrary point of $Y\setminus \overline U$, this shows that $\overline U\cap Y$ is a C-set in $Y$ as desired.\end{proof}

\end{document}